\newtheorem{theorem}{Theorem}[section]
\newtheorem{lemma}{Lemma}[section]
\theoremstyle{definition}
\begin{document}
\title{On the third ABC index of trees and unicyclic graphs}
\author{
{\small Rui Song$^{a,b,}$\footnote{Corresponding author.
\newline{\it \hspace*{5mm}Email addresses:} 15819931748@163.com(R. Song).}\ \ , Jianbin Cui$^{a,b}$\ \ , Xiaozhong Qi$^{a}$}\\[2mm]
\footnotesize $^{a}$School of Mathematics and Information Engineering, Longdong University,\\ \footnotesize Qingyang, Gansu 745000, P.R.China\\
\footnotesize $^b$Institute of Applied Mathematics, Longdong University, Qingyang, Gansu 745000, P.R.China
}
\date{}
\maketitle {\flushleft\large\bf Abstract:}
Let $G=(V,E)$ be a simple connected graph with vertex set $V(G)$ and edge set $E(G)$. The third atom-bond connectivity index is defined as $ABC_3(G)=\sum\limits_{uv\in E(G)}\sqrt{\frac{e(u)+e(v)-2}{e(u)e(v)}}$, where eccentricity $e(u)$ is the largest distance between $u$ and any other vertex of $G$, namely $e(u)=\max\{d(u,v)|v\in V(G)\}$. This work determines the maximal $ABC_3$ index of unicyclic graphs with any given girth and trees with any given diameter, and characterizes the corresponding graphs.

\begin{flushleft}
\textbf{Keywords:} eccentricity; third $ABC$ index; unicyclic graph; tree

\end{flushleft}
\textbf{AMS Classification:} 05C09
\section{Introduction}
Molecular descriptors are playing significant role in the chemistry, pharmacology and biology. Among them, topological indices have a prominent place \cite{1}. One of the best known and widely used indices is the connectivity index $\chi$ introduced in 1975 by Milan Randi\'{c} \cite{2}.

For any graph $G=(V,E)$ with vertex set $V(G)$ and edge set $E(G)$, let $|V(G)|=n$ and $|E(G)|=m$ and  $d(u)$ denote the degree of the vertex $u\in V(G)$. The distance $d_{G}(u,v)$ between vertices $u$ and $v$ is defined as the length of any shortest path in $G$ connecting $u$ and $v$. For a vertex $u\in V(G)$, its eccentricity $e_{G}(u)$, or $e(u)$ for short, is the largest distance between $u$ and any other vertex of $G$. The diameter $d(G)$ of $G$ is defined as the maximum eccentricity among the vertices of $G$, similarly, the radius $r(G)$ is defined as the minimum eccentricity among the vertices of $G$. There are some topological indices which are related to the eccentricity, for example, the eccentric connectivity index, Zagreb eccentricity indices, eccentric distance sum.

Ernesto Estrada et al. \cite{3} proposed a topological index named atom-bond connectivity index, or $ABC$ index for short, using a modification of the Randi\'{c} connectivity index, as follows.
$$
ABC(G)=\sum\limits_{uv\in E(G)}\sqrt{\frac{d(u)+d(v)-2}{d(u)d(v)}}.
$$
$ABC$ index has been proven to be a valuable predictive index in the study of the heat formation of alkanes \cite{3,4}, some of its mathematical properties are obtained in \cite{5,6,16,17,18}. A. Graovac and M. Ghorbani \cite{7} proposed a new index, nowadays known as the second $ABC$ index, as
$$
ABC_{2}(G)=\sum\limits_{uv\in E(G)}\sqrt{\frac{n(u)+n(v)-2}{n(u)n(v)}},
$$
where $n(u)$ is the number of vertices of $G$ whose distance to the vertex $u$ is smaller than the distance to the vertex $v$. In the chemistry applications, it is used to model both the boiling point and melting point of the molecules. Some papers have given a lot of properties and upper bounds for the second $ABC$ index \cite{8,9,10,11,12,19}.

In 2016, Dae-won Lee \cite{13} defined the third $ABC$ index, $ABC_3$ index for short, of graph $G$ as
$$
ABC_{3}(G)=\sum\limits_{uv\in E(G)}\sqrt{\frac{e(u)+e(v)-2}{e(u)e(v)}},
$$
some lower and upper bounds on third $ABC$ index of graphs were obtained therein.

Let $U_{n}(k)$ be the set of $n$-vertex unicyclic graphs with girth $k$ and $U_{n}$ be the set of all unicyclic grahs of order $n$. In section 2, we investigate the $ABC_{3}$ index of unicyclic graphs in $U_{n}(k)$ and characterize the extremal graphs with the maximal $ABC_{3}$ index in $U_{n}$; in section 3, we determine the second maximal $ABC_{3}$ index among all unicyclic graphs in $U_{n}$. In section 4, we characterize the extremal trees with the maximal $ABC_{3}$ index with any given diameter.

\section{The maximal third $ABC$ index of unicyclic graphs}

If $k=n,n-1$, then $U_{n}(k)$ is unique. So in what follows we assume that $3\leq k\leq n-2$. The following lemma is obviously true, the readers can find its proof in \cite{14}.

\begin{lemma}[\cite{14}]\label{lem-2-1}
If $u,v$ are two adjacent vertices of graph $G$, then $|e_{G}(u)-e_{G}(v)|\leq 1$.
\end{lemma}

For any real numbers $x,y\geq 1$, let $f(x,y)=\sqrt{\frac{x+y-2}{xy}}$. Then  $f(x+1,x)=\sqrt{\frac{2x-1}{x^{2}+x}}$, $f(x,x)=\sqrt{\frac{2x-2}{x^{2}}}$.

\begin{lemma}[\cite{15}]\label{lem-2-2}
$f(x,1)$ is strictly increasing for $x$, $f(x,2)=\sqrt{\frac{1}{2}}$, and $f(x,y)$ is strictly decreasing for $x$ if $y\geq 3$.
\end{lemma}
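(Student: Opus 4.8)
The plan is to reduce the whole statement to a single sign computation by passing to the square $g(x,y):=f(x,y)^{2}=\frac{x+y-2}{xy}$. Since the square-root function is strictly increasing on $[0,\infty)$, and since $x,y\ge 1$ guarantees $g(x,y)\ge 0$ so that $f(x,y)=\sqrt{g(x,y)}$ is real, the function $f(\cdot,y)$ inherits exactly the monotonicity of $g(\cdot,y)$. Thus it suffices to track how $g$ varies with $x$ for each fixed $y$, which avoids dealing with the square root directly.

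The key algebraic step is to isolate the $x$-dependence by splitting the fraction,
$$g(x,y)=\frac{1}{y}+\frac{y-2}{y}\cdot\frac{1}{x},$$
so that
$$\frac{\partial g}{\partial x}=-\frac{y-2}{y}\cdot\frac{1}{x^{2}}.$$
Because $x,y>0$, the sign of this derivative is controlled entirely by the factor $-(y-2)$, which is precisely what separates the three regimes $y<2$, $y=2$, and $y>2$ named in the statement.

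From here the three cases are immediate. For $y=1$ the formula collapses to $g(x,1)=1-\frac{1}{x}$, giving $\frac{\partial g}{\partial x}=\frac{1}{x^{2}}>0$, so $f(x,1)$ is strictly increasing. For $y=2$ the coefficient $\frac{y-2}{y}$ vanishes, hence $g(x,2)\equiv\frac{1}{2}$ is constant in $x$ and $f(x,2)=\sqrt{\tfrac{1}{2}}$. For $y\ge 3$ we have $y-2>0$, so $\frac{\partial g}{\partial x}<0$ and therefore $f(\cdot,y)$ is strictly decreasing.

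There is no genuine obstacle here; the result is a short calculus exercise once the fraction has been rewritten with its $x$-dependence isolated. The only point worth a word of care is the transfer of monotonicity from $g$ to $f$: one must note that $x,y\ge 1$ keeps $g$ nonnegative, so the square root is well defined and strictly monotone. (One could instead differentiate $f$ directly; the chain rule merely inserts the positive factor $\frac{1}{2\sqrt{g}}$ and changes none of the sign conclusions.)
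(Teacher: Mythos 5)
Your proposal is correct. Note, however, that the paper does not prove this lemma at all: it is quoted from reference \cite{15} and used as a black box, so there is no in-paper proof to compare against. Your computation---squaring $f$, isolating the $x$-dependence as $g(x,y)=\frac{1}{y}+\frac{y-2}{y}\cdot\frac{1}{x}$, and reading the sign of $\partial g/\partial x$ off the factor $-(y-2)$---is a complete and valid proof, and it is exactly in the spirit of the paper's own proof of Lemma \ref{lem-2-3}, which likewise passes to the squared function and differentiates; your care about the nonnegativity of $g$ (needed so that the strictly increasing square root transfers monotonicity) is the only delicate point, and you handled it.
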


\begin{lemma}\label{lem-2-3}
If $x\geq 2$, then $f(x+1,x)$ and $f(x,x)$ are strictly decreasing.
\end{lemma}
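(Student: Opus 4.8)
The plan is to exploit that $t\mapsto\sqrt{t}$ is strictly increasing on $[0,\infty)$, so it suffices to prove that the two radicands
\[
g(x)=\frac{2x-1}{x^{2}+x},\qquad h(x)=\frac{2x-2}{x^{2}}
\]
are strictly decreasing on $[2,\infty)$. Both are plainly nonnegative there, so passing to square roots preserves strict monotonicity and delivers the claims for $f(x+1,x)=\sqrt{g(x)}$ and $f(x,x)=\sqrt{h(x)}$. Thus the whole problem reduces to analysing the signs of $g'$ and $h'$.

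First I would treat $g$. By the quotient rule,
\[
g'(x)=\frac{2(x^{2}+x)-(2x-1)(2x+1)}{(x^{2}+x)^{2}}=\frac{-2x^{2}+2x+1}{(x^{2}+x)^{2}}.
\]
The denominator is positive, so $\operatorname{sgn} g'(x)=\operatorname{sgn} p(x)$ with $p(x)=-2x^{2}+2x+1$. Since $p$ is a downward parabola whose larger root is $\tfrac{1+\sqrt{3}}{2}<2$, we have $p(x)<0$ for all $x\ge 2$ (e.g. $p(2)=-3$), hence $g'(x)<0$ and $g$ is strictly decreasing on $[2,\infty)$.

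Next I would treat $h$. Writing $h(x)=\tfrac{2}{x}-\tfrac{2}{x^{2}}$ gives
\[
h'(x)=-\frac{2}{x^{2}}+\frac{4}{x^{3}}=\frac{2(2-x)}{x^{3}}.
\]
Here $h'(x)<0$ for every $x>2$, but $h'(2)=0$, and this boundary effect at $x=2$ is the one point that requires care: a naive appeal to a strictly negative derivative fails exactly at the left endpoint. The obstacle is resolved by the standard fact that a continuous function whose derivative is negative throughout an interval except at an isolated point is nonetheless strictly decreasing on that interval; applied to $(2,\infty)$ together with continuity at $x=2$, this yields strict decrease on $[2,\infty)$.

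Combining the two analyses, $f(x+1,x)$ and $f(x,x)$ are both strictly decreasing for $x\ge 2$, as claimed. If one prefers to regard $x$ as ranging only over the integers $\ge 2$, as the eccentricity application suggests, the same conclusion follows even more directly by verifying $g(x+1)<g(x)$ and $h(x+1)<h(x)$ through cross-multiplication, which sidesteps the endpoint subtlety entirely.
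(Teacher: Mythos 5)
Your proof is correct and follows essentially the same route as the paper: drop to the radicands $g(x)=\frac{2x-1}{x^2+x}$ and $h(x)=\frac{2x-2}{x^2}$, apply the quotient rule, and read off the sign of the derivative. You are in fact more careful than the paper on one point. The paper handles $g$ explicitly (same derivative formula you obtained) and then dismisses $f(x,x)$ with a single word ``Similarly,'' but the situation is not literally similar: as you note, $h'(x)=\frac{2(2-x)}{x^3}$ \emph{vanishes} at $x=2$, so the strictly-negative-derivative argument does not apply verbatim on $[2,\infty)$. Your two fixes --- invoking the standard fact that a continuous function whose derivative is negative except at an isolated point is still strictly decreasing, or simply checking $h(x+1)<h(x)$ by cross-multiplication for integers $x\geq 2$ (which is all the applications to eccentricities require) --- both close this small gap cleanly. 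So your write-up is a strict improvement in rigor over the paper's, while using the same underlying method.
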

\begin{proof}
Let $g(x)=\frac{2x-1}{x^2+x}$. Then $\frac{dg(x)}{dx}=\frac{-2x^2+2x+1}{(x^2+x)^2}<0$ since $x\geq 2$, and so $f(x+1,x)$ is strictly decreasing. Similarly, $f(x,x)$ is also strictly decreasing when $x\geq 2$.
\end{proof}

Let $G_{0}$ be a connected graph of order $n$ with at least two vertices and $u_{0}$ be one of its vertex. Let $G_{1}$ be the graph obtained by identifying vertex $u_{0}$ of $G_{0}$ and a pendent vertex of a star $S_{t+2}$, and $G_{2}$ be the graph obtained by identifying vertex $u_{0}$ of $G_{0}$ and the center of a star $S_{t+2}$.

\begin{lemma}\label{lem-2-4}
Let $t\geq2$ be an integer then $ABC_{3}(G_{1})\leq ABC_{3}(G_{2})$, with equality holding if and only if $G_0\cong S_n$ and $u_{0}$ is its center.
\end{lemma}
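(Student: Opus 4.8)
The plan is to exploit that $u_0$ is a cut vertex separating the attached star from $G_0$ in both $G_1$ and $G_2$, so that all distances are additive through $u_0$ and the internal distances of $G_0$ are unchanged. Writing $\epsilon=e_{G_0}(u_0)$ and, for $w\in V(G_0)$, $a_w=d_{G_0}(w,u_0)$ and $b_w=e_{G_0}(w)$, I would first record the eccentricities explicitly. For $w\in V(G_0)$ the farthest new vertex is a star leaf, giving $e_{G_2}(w)=\max(b_w,a_w+1)$ and $e_{G_1}(w)=\max(b_w,a_w+2)$; comparing these yields the crucial relation $e_{G_2}(w)\le e_{G_1}(w)\le e_{G_2}(w)+1$. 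For the star vertices I would compute $e_{G_2}(\ell_i')=\max(2,\epsilon+1)$, $e_{G_1}(c)=\max(2,\epsilon+1)$ and $e_{G_1}(\ell_i)=\epsilon+2$, together with $e_{G_2}(u_0)=\epsilon$ and $e_{G_1}(u_0)=\max(\epsilon,2)$. A key consequence I would isolate at once is that every vertex of $G_0$ other than $u_0$ has eccentricity at least $2$ in $G_2$ and at least $3$ in $G_1$.

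Next I would split $ABC_3(G_2)-ABC_3(G_1)=\Delta_{\mathrm{star}}+\Delta_{G_0}$ into the contribution of the $t+1$ star edges and that of the edges inside $G_0$ (which carry the same edge set but different endpoint eccentricities). For the star part I would distinguish $\epsilon\ge2$ and $\epsilon=1$ (the value $\epsilon=0$ is excluded since $G_0$ has at least two vertices). When $\epsilon\ge2$ everything simplifies and $\Delta_{\mathrm{star}}=t\big[f(\epsilon,\epsilon+1)-f(\epsilon+1,\epsilon+2)\big]$, which is strictly positive by Lemma~\ref{lem-2-3} (applied to $f(x+1,x)$ at $x=\epsilon\ge2$) together with $t\ge2$. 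When $\epsilon=1$ all relevant arguments collapse onto the level set where $f=\tfrac1{\sqrt2}$; here Lemma~\ref{lem-2-2}, namely $f(x,2)=\tfrac1{\sqrt2}$ and the identity $f(2,3)=f(3,2)=\tfrac1{\sqrt2}$, does the bookkeeping, and one finds $\Delta_{\mathrm{star}}=0$ exactly.

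For $\Delta_{G_0}$ I would prove that each edge $xy\in E(G_0)$ contributes nonnegatively by changing one coordinate at a time: pass from $f(e_{G_2}(x),e_{G_2}(y))$ to $f(e_{G_1}(x),e_{G_2}(y))$ and then to $f(e_{G_1}(x),e_{G_1}(y))$. At each step the moving coordinate increases by $0$ or $1$ while the frozen coordinate stays at least $2$ (this is exactly where the ``eccentricity at least $2$'' remark is used, both for the endpoint $\ne u_0$ and for $u_0$ itself via $e_{G_1}(u_0)=\max(\epsilon,2)$). By Lemma~\ref{lem-2-2}, a value at least $2$ in the frozen slot makes $f$ nonincreasing in the other slot, so each step cannot increase $f$; hence $\Delta_{G_0}\ge0$. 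Combined with the previous paragraph this already yields $ABC_3(G_1)\le ABC_3(G_2)$.

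Finally I would pin down equality. Equality forces both $\Delta_{\mathrm{star}}=0$ and $\Delta_{G_0}=0$; the first forces $\epsilon=1$ by the strict inequality above. Given $\epsilon=1$, every vertex of $G_0$ is adjacent to $u_0$, so an edge $xy$ with $x,y\ne u_0$ has $e_{G_2}(x)=e_{G_2}(y)=2$ and $e_{G_1}(x)=e_{G_1}(y)=3$, contributing $f(2,2)-f(3,3)=\tfrac1{\sqrt2}-\tfrac23>0$; thus $\Delta_{G_0}=0$ is possible only when $G_0$ has no such edge, i.e.\ $G_0\cong S_n$ with center $u_0$, and one checks this configuration indeed gives equality. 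I expect the main obstacle to be precisely the boundary behaviour of $f$ at eccentricity values $1$ and $2$, where $f$ fails to be decreasing: the argument must route every comparison so that the frozen coordinate is at least $2$, and the genuinely degenerate case $\epsilon=1$ (where the star part contributes nothing) must be rescued by the internal edges of $G_0$ in order to recover both the inequality and its sharp equality description.
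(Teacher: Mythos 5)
Your proof is correct, and at the top level it follows the same strategy as the paper: split $ABC_3(G_2)-ABC_3(G_1)$ into a star-edge part and an $E(G_0)$ part, evaluate the star part explicitly (strictly positive via Lemma~\ref{lem-2-3} when $e_{G_0}(u_0)\geq 2$, exactly zero when $e_{G_0}(u_0)=1$ because every term sits on the level set $f=\sqrt{\tfrac12}$), show the $E(G_0)$ part has the right sign via the monotonicity statements of Lemma~\ref{lem-2-2}, and extract the equality case $G_0\cong S_n$ centered at $u_0$. Where you genuinely depart from the paper is in how that middle step is organized. The paper runs a three-case analysis on the relation between $d_0=e_{G_0}(u_0)$ and the diameter $d(G_0)$ (cases $d(G_0)=d_0$, $d_0=d(G_0)-1$, $d_0\leq d(G_0)-2$, each subdivided at $d_0=1$), and inside each case asserts the sign of the $E(G_0)$ sum by direct appeal to Lemma~\ref{lem-2-2}. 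Your closed-form expressions $e_{G_2}(w)=\max(e_{G_0}(w),\,d_{G_0}(w,u_0)+1)$ and $e_{G_1}(w)=\max(e_{G_0}(w),\,d_{G_0}(w,u_0)+2)$ make that case distinction unnecessary: the resulting inequality $e_{G_2}(w)\leq e_{G_1}(w)\leq e_{G_2}(w)+1$, combined with your one-coordinate-at-a-time comparison in which $u_0$'s coordinate is always the first one moved (so the frozen slot is either $e_{G_2}(x)\geq 2$ for $x\neq u_0$ or $e_{G_1}(u_0)=\max(e_{G_0}(u_0),2)\geq 2$), disposes of every edge of $G_0$ uniformly. That routing is precisely the delicate point --- $f$ fails to be monotone when a slot equals $1$ --- and you isolate it explicitly, whereas the paper navigates it implicitly inside its cases. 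What the paper's case analysis buys is explicit values of the difference in each configuration (e.g., the computation for $G_0\cong K_n$); what your route buys is a shorter, uniform argument and a crisper equality analysis (the star part forces $e_{G_0}(u_0)=1$, and then any edge of $G_0$ avoiding $u_0$ contributes $f(2,2)-f(3,3)>0$, forcing $G_0$ to be a star with center $u_0$). One cosmetic remark: strict positivity of the star part for $e_{G_0}(u_0)\geq 2$ needs only $t\geq 1$, not $t\geq 2$.
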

\begin{proof}
Let $d_0=e_{G_0}(u_0)=\max\{d_{G_0}(u,u_0)|u\in V(G_0)\}$. Then
\begin{eqnarray}
ABC_{3}(G_{1})-ABC_{3}(G_{2})
&=&\sum\limits_{uv\in E(G_{0})}\left(f(e_{G_{1}}(u),e_{G_{1}}(v))-f( e_{G_{2}}(u),e_{G_{2}}(v))\right) \nonumber \\
& &+\sum\limits_{uv\in E(S_{t+2})}\left(f(e_{G_{1}}(u),e_{G_{1}}(v))-f(e_{G_{2}}(u),e_{G_{2}}(v))\right).  \nonumber
\end{eqnarray}
Since $G_0$ is a connected graph with $|G_0|\geq 2$, it follows that $d(G_0)\geq d_0\geq 1$.

\textbf{Case 1.} $d(G_0)=d_0$. If $d(G_0)=d_0=1$, then $G_0$ is a complete graph $K_n$. In this subcase, for each edge $uv$ of $G_1$ with $e_{G_1}(u)=3=e_{G_1}(v)$, we deduce that $uv\in E(G_0)$ and $e_{G_2}(u)=2=e_{G_2}(v)$. Hence,
\begin{eqnarray}
&&ABC_{3}(G_{1})-ABC_{3}(G_{2})\nonumber \\
&=&\sum\limits_{uv\in E(G_{0})}\left(f(e_{G_{1}}(u),e_{G_{1}}(v))-f(e_{G_{2}}(u),e_{G_{2}}(v))\right) \nonumber \\
& &+\sum\limits_{uv\in E(S_{t+2})}\left(f(e_{G_{1}}(u),e_{G_{1}}(v))-f(e_{G_{2}}(u),e_{G_{2}}(v))\right) \nonumber \\
&=&(\frac{n(n-1)}{2}-(n-1))(f(3,3)-f(2,2))+(n-1)(f(3,2)-f(2,1))\nonumber \\
&&+(f(2,2)-f(2,1))+t(f(3,2)-f(2,1))\nonumber \\
&=&\frac{(n-1)(n-2)}{2}(f(3,3)-f(2,2))\nonumber \\
&\leq&0,\nonumber
\end{eqnarray}
where the last equality follows from Lemma \ref{lem-2-2}. The equality of the last inequality holds if and only if $n=2$, namely $G_0\cong K_2$.

Now we consider the second subcase when $d(G_0)=d_0\geq 2$. Since $\frac{d(G)}{2}\leq r(G)\leq d(G)$ it follows that $\lceil\frac{d_0}{2}\rceil\leq e_{G_0}(u)\leq d_0$ for any $u\in V(G_0)$. Notice that if $e_{G_0}(u)\geq d(u,u_0)+2$ then $e_{G_1}(u)=e_{G_2}(u)=e_{G_0}(u)$; if $e_{G_0}(u)\leq d(u,u_0)+1$ then $e_{G_1}(u)=e_{G_2}(u)+1=d(u,u_0)+2$. Furthermore, there is a vertex $u\in V(G_0)$ such that $d(u,u_0)=d_0=d(G_0)$, and so $e_{G_1}(u)=e_{G_2}(u)+1$. Hence,
\begin{eqnarray}
&&ABC_{3}(G_{1})-ABC_{3}(G_{2})\nonumber \\
&=&\sum\limits_{uv\in E(G_{0})}\left(f(e_{G_{1}}(u),e_{G_{1}}(v))-f(e_{G_{2}}(u),e_{G_{2}}(v))\right) \nonumber \\
& &+\sum\limits_{uv\in E(S_{t+2})}\left(f(e_{G_{1}}(u),e_{G_{1}}(v))-f(e_{G_{2}}(u),e_{G_{2}}(v))\right) \\
&<&\sum\limits_{uv\in E(S_{t+2})}\left(f(e_{G_{1}}(u),e_{G_{1}}(v))-f(e_{G_{2}}(u),e_{G_{2}}(v))\right) \\
&=&(f(d_0,d_0+1)-f(d_0,d_0+1))+t(f(d_0+1,d_0+2)-f(d_0,d_0+1)) \\
&<&0.
\end{eqnarray}

\textbf{Case 2.} $d_0=d(G_0)-1$. If $d_0=d(G_0)-1=1$, then every vertex of $V(G_0)$ is adjacent to $u_0$. And so, $e_{G_{1}}(u)=e_{G_{2}}(u)+1$ for any vertex $u\in V(G_0)$. By formula (1), we have
\begin{eqnarray}
&&ABC_{3}(G_{1})-ABC_{3}(G_{2})\nonumber \\
&\leq&\sum\limits_{uv\in E(S_{t+2})}\left(f(e_{G_{1}}(u),e_{G_{1}}(v))-f(e_{G_{2}}(u),e_{G_{2}}(v))\right) \nonumber \\
&=&(f(2,2)-f(1,2))+t(f(2,3)-f(1,2)) \nonumber \\
&=&0.\nonumber
\end{eqnarray}
The equality in last inequality holds if and only if $G_0\cong S_n$ with center $u_0$, since if $G_0$ is not such graph then it has an edge $xy$ not incident with $u_0$, and so $e_{G_1}(x)=e_{G_1}(y)=3$ but $e_{G_2}(x)=e_{G_2}(y)=2$, which implies formula (2).

In the second subcase when $d_0=d(G_0)-1\geq 2$, we have that $e_{G_1}(u_0)=e_{G_2}(u_0)=e_{G_0}(u_0)=d_0\geq 2$, and that $e_{G_1}(x)\geq e_{G_2}(x)\geq 2$ for every vertex $x\in V(G_0)-\{u_0\}$. Furthermore, for the vertex $u\in V(G_0)$ with $d(u_0,u)=d_0\geq 2$ we deduce that $e_{G_1}(u)=d_0+2\geq 4$ and $e_{G_1}(x)\geq 3$ for any neighbor of $u$. Hence, formula (2) follows by the last two results of Lemma \ref{lem-2-2}. Therefore, formulas (3) and (4) are also true in this subcase.

\textbf{Case 3.} $d_0\leq d(G_0)-2$. Since $d_0\geq 1$, it follows that $d_0\geq \lceil\frac{d(G_0)}{2}\rceil\geq 2$ in this case. And so, $e_{G_1}(x)\geq e_{G_2}(x)\geq 2$ for any vertex $x\in V(G_0)$. Furthermore, for the vertex $u\in V(G_0)$ with $d(u,u_0)=d_0$ it has a neighbor $v$ such that $e_{G_1}(u)\geq 4$, $e_{G_2}(u)\geq 3$, $e_{G_1}(v)\geq 3$ and $e_{G_2}(v)\geq 2$. Therefore, formula (2) follows by the last two results of Lemma \ref{lem-2-2}, which implies formulas (3) and (4).
\end{proof}

Let $H(n,k;n_{1},n_{2},\ldots, n_{k})$ be a unicyclic graph of order $n$ obtained from a cycle $C_{k}=v_{1}v_{2}\cdots v_{k}$ by attaching  $n_{i}$ new vertices to vertex $v_{i}$ for every $i=1,2,\ldots,k$, where $\sum\limits^{k}_{i=1}n_{i}=n-k$.

\begin{lemma}\label{lem-2-5}
Let $G\in U_n(k)$ be a unicyclic graph of order $n\geq 6$. If $k=3$ then $ABC_3(G)\leq n\sqrt{\frac{1}{2}}$, with equality holding if and only if $G\cong H(n,3;n_1,n_2,n_3)$; if $k\geq 4$ then
\begin{eqnarray}
&&ABC_3(G)  \nonumber \\
&\leq&
\left\{ \begin{array}{ll}
(n-k+2)f(\frac{k}{2}+1,\frac{k}{2})+(k-2)f(\frac{k}{2},\frac{k}{2})  \quad \mbox{if $k$ is even }; \\
(n-k+2)f(\frac{k+1}{2},\frac{k-1}{2})+(k-3)f(\frac{k-1}{2},\frac{k-1}{2})+f(\frac{k+1}{2},\frac{k+1}{2}) \quad \mbox{if $k$ is odd},
\end{array}
\right.
\end{eqnarray}
with equality holding if and only if $G\cong H(n,k; n-k,0,\ldots,0)$.
\end{lemma}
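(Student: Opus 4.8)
The plan is to prove a universal per-edge bound that settles the girth-$3$ case directly, and then, for $k\ge 4$, to first reduce to the graphs $H(n,k;n_1,\dots,n_k)$ via Lemma \ref{lem-2-4} and afterwards optimize over the pendant distribution $(n_1,\dots,n_k)$. The starting point is the elementary algebraic fact, extractable from Lemma \ref{lem-2-2}, that for $x,y\ge 1$ one has $f(x,y)\le\sqrt{\tfrac12}$ with equality iff $(x-2)(y-2)=0$, while $f(x,y)>\sqrt{\tfrac12}$ exactly when $(x-2)(y-2)<0$. By Lemma \ref{lem-2-1} adjacent vertices differ in eccentricity by at most $1$, so no edge joins a vertex of eccentricity $1$ to one of eccentricity $\ge 3$; hence $(e(u)-2)(e(v)-2)\ge 0$ on every edge and $f(e(u),e(v))\le\sqrt{\tfrac12}$. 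Summing over the $n$ edges gives $ABC_3(G)\le n\sqrt{\tfrac12}$ for every $G\in U_n$, with equality iff every edge has an endpoint of eccentricity $2$.

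For $k=3$ I would show this equality condition forces $H$-form. If some vertex lies at distance $\ge 2$ from the triangle $v_1v_2v_3$, then the two cycle vertices not on its branch both have eccentricity $\ge 3$, so the cycle edge joining them has $(e-2)(e'-2)\ge 1>0$ and violates equality; thus all non-cycle vertices are pendants on the triangle. A short check (every cycle vertex then has eccentricity $2$, and $f(x,2)=\sqrt{\tfrac12}$) shows conversely that each $H(n,3;n_1,n_2,n_3)$ attains $n\sqrt{\tfrac12}$.

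For $k\ge 4$ I would first apply Lemma \ref{lem-2-4} repeatedly, pulling each hanging branch inward until every non-cycle vertex is a pendant on the cycle; this does not decrease $ABC_3$ and preserves the girth, so it suffices to maximize over $G\cong H(n,k;n_1,\dots,n_k)$. (Branches whose penultimate vertex carries fewer than two leaves lie outside the hypothesis $t\ge 2$ of Lemma \ref{lem-2-4} and would need a direct eccentricity comparison.) Writing $\ell=\lfloor k/2\rfloor$, the crucial structural facts are: the cycle is isometric and $d(v_i,v_j)\le\ell$, so every cycle vertex satisfies $e(v_i)\in\{\ell,\ell+1\}$, with $e(v_i)=\ell+1$ exactly when $v_i$ is at cycle-distance $\ell$ from some pendant-bearing vertex; and every pendant $p$ at $v_i$ has $e(p)=e(v_i)+1$. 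By Lemma \ref{lem-2-3} (with $\ell\ge 2$) the three relevant values obey $f(\ell,\ell)\ge f(\ell,\ell+1)>f(\ell+1,\ell+1)$; set $\alpha=f(\ell,\ell)-f(\ell,\ell+1)\ge 0$ and $\beta=f(\ell,\ell)-f(\ell+1,\ell+1)>\alpha$, noting $\alpha=0$ precisely when $\ell=2$.

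The optimization then splits $ABC_3$ into a pendant-edge part and a cycle-edge part, relative to the baseline $kf(\ell,\ell)+(n-k)f(\ell,\ell+1)$. The pendant part equals $(n-k)f(\ell,\ell+1)-D$, where $D\ge 0$ vanishes iff every pendant-bearing vertex has eccentricity exactly $\ell$. Letting $B$ be the set of cycle vertices of eccentricity $\ell+1$, and $b,c$ the numbers of cycle edges with exactly one, resp.\ two, endpoints in $B$, the cycle part equals $kf(\ell,\ell)-(b\alpha+c\beta)$, where $b$ is even (a vertex subset of a cycle has even edge-boundary) and $b+2c=2|B|$. Since a pendant-bearing vertex forces its cycle-antipode(s) into $B$, one has $|B|\ge 1$ for even $k$, and $|B|\ge 2$ with $c\ge 1$ for odd $k$. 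A short case analysis on $(b,c)$ using $\beta>\alpha\ge 0$ and the parity of $b$ then gives $b\alpha+c\beta\ge 2\alpha$ (even) and $b\alpha+c\beta\ge 2\alpha+\beta$ (odd), so that $ABC_3(G)=\text{baseline}-D-(b\alpha+c\beta)$ is at most the stated expression. Equality requires $D=0$ together with the minimal cycle-penalty; the latter forces $|B|$ minimal, and since on a $k$-cycle the antipode map ($k$ even) or the antipodal-pair map ($k$ odd) is injective, minimality of $|B|$ collapses the pendant-bearing set to a single vertex, i.e.\ $G\cong H(n,k;n-k,0,\dots,0)$. I expect the main obstacle to be this last combinatorial step: proving the penalty inequality together with its equality case uniformly in the parity of $k$ and in the degenerate regime $\ell=2$ (where $\alpha=0$), and confirming that the reduction through Lemma \ref{lem-2-4} indeed reaches $H$-form for every branch shape.
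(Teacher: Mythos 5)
Your proposal is correct, and it takes a genuinely different route from the paper in both halves of the lemma. For $k=3$ the paper first forces $G$ into the form $H(n,3;n_1,n_2,n_3)$ via Lemma \ref{lem-2-4} and then computes; you instead prove a universal per-edge bound $f(e(u),e(v))\le\sqrt{\frac{1}{2}}$ from the identity $2(x+y-2)-xy=-(x-2)(y-2)$ combined with Lemma \ref{lem-2-1} (no edge joins eccentricity $1$ to eccentricity $\ge 3$), which yields $ABC_3(G)\le n\sqrt{\frac{1}{2}}$ for \emph{every} unicyclic graph at once, makes the equality case (every edge needs an endpoint of eccentricity exactly $2$) transparent, and in fact subsumes the inequality of Theorem \ref{thm-2-6}. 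For $k\ge 4$ both you and the paper reduce to $H(n,k;n_1,\ldots,n_k)$ via Lemma \ref{lem-2-4}; you rightly flag the depth-two branches with a single leaf ($t=1$), which the paper passes over silently, though the worry is harmless since the strict inequalities in the paper's proof of that lemma only require $t\ge 1$ outside the trivial cases. The real divergence is in the optimization over $(n_1,\ldots,n_k)$: the paper settles this with a qualitative two-sentence claim (``to maximize it suffices to maximize each term, hence attach all pendants to a unique cycle vertex''), whereas your baseline-minus-penalty bookkeeping ($D$ for pendant edges; $b\alpha+c\beta$ for cycle edges with $b$ even, $b+2c=2|B|$; antipodes of pendant-bearing vertices forced into $B$) turns this into a precise inequality with a clean equality analysis. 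I checked that your foreseen case analysis does close: writing $a$ for the number of arcs of $B$, the penalty is $2a\alpha+(|B|-a)\beta\ge 2\alpha$ for even $k$ and $\ge 2\alpha+\beta$ for odd $k$ (the forced adjacent antipodal pair gives $c\ge 1$), and in the degenerate regime $\ell=2$ equality forces $c=0$ and $D=0$, which again collapses the pendant-bearing set to a single vertex. One attribution to repair in a write-up: $f(\ell,\ell)\ge f(\ell,\ell+1)$ follows from neither Lemma \ref{lem-2-3} nor Lemma \ref{lem-2-2}; it needs the one-line computation $(2\ell-2)(\ell+1)-(2\ell-1)\ell=\ell-2\ge 0$, which also confirms your observation that $\alpha=0$ exactly when $\ell=2$. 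With that and the $t=1$ patch supplied, your outline is a complete proof, and in the key optimization step it is more rigorous than the paper's.
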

\begin{proof}
Let $G\in U_n(k)$ be a unicyclic graph with the maximum $ABC_3$ index. By Lemma \ref{lem-2-4}, $G$ must be of the form $H(n,k;n_1,n_2,\ldots,n_k)$. And so,
$$ABC_3(G)=\sum\limits_{v_iv_j\in E(C_k)}f(e_G(v_i),e_G(v_j))+\sum\limits_{\substack{v_ix\in E(G)\\ d(x)=1}}f(e_G(v_i),e_G(x)).$$

Firstly, we consider the case when $k=3$. If $n_1,n_2,n_3\geq 1$ then $e_{G}(v_i)=2$ for every vertex $v_i$ in the cycle and $e_G(x)=3$ for every pendent vertex of $G$. From the last two results of Lemma \ref{lem-2-2} it follows that
$$ABC_3(G)=3\sqrt{\frac{1}{2}}+\sum\limits^3_{i=1}k_if(2,3)=n\sqrt{\frac{1}{2}}.$$
Since $n\geq 6$ and $k=3$, it follows that at least one of $n_1,n_2$ and $n_3$ is not zero. And so, for every edges of $G$, at least one of its end has eccentricity two. Hence, the above formula is true in any case.

Secondly, we consider the case when $k\geq 4$. In this case, one can deduce easily that $e_G(x)\geq 2$ for every vertex $x$ of graph $G$. By Lemma \ref{lem-2-1} and \ref{lem-2-3}, to maximize $ABC_3(G)$ it suffices to maximize $f(e_G(u),e_G(u)+1)$ for every edge $uv$ with $e_G(v)=e_G(u)+1$, and to maximize $f(e_G(u),e_G(u))$  for every edge $uv$ with $e_G(v)=e_G(u)$. Let $C=v_1v_2\cdots v_kv_1$ be the cycle of graph $G$. Notice that $e_G(v_i)\leq e_C(v_i)+1$ for every vertex $v_i\in V(C)$, and that $e_G(x)=e_C(v_i)+1$ or $e_C(v_i)+2$ for every pendent vertex $x$ adjacent to vertex $v_i\in V(C)$, with $e_G(x)=e_C(v_i)+1$ if and only if $n_{i+\frac{k}{2}(\text{mod}\ k)}=0$ when $k$ is even, and $n_{i+\frac{k-1}{2}(\text{mod}\ k)}=n_{i+\frac{k+1}{2}(\text{mod}\ k)}=0$ when $k$ is odd. In conclusion, to maximize $ABC_3(G)$ if and only if to attach every pendent vertex of graph $G$ to a unique vertex of cycle $C$, and so the extremal graph is as prescribed in this lemma.

In the extremal graph $G$, only one vertex of $C$ has eccentricity $\frac{k}{2}+1$ when $k$ is even, and exactly two vertices of $C$ has eccentricity $\frac{k+1}{2}$ when $k$ is odd. Therefore, when $k$ is even
\begin{eqnarray}
ABC_3(G)&=&(n-k)f(\frac{k}{2}+1,\frac{k}{2})+2f(\frac{k}{2}+1,\frac{k}{2})+(k-2)f(\frac{k}{2},\frac{k}{2})  \nonumber \\
&=&(n-k+2)f(\frac{k}{2}+1,\frac{k}{2})+(k-2)f(\frac{k}{2},\frac{k}{2});\nonumber
\end{eqnarray}
when $k$ is odd,
\begin{eqnarray}
ABC_3(G)&=&(n-k)f(\frac{k+1}{2},\frac{k-1}{2})+f(\frac{k+1}{2},\frac{k+1}{2})\nonumber \\
&&+2f(\frac{k+1}{2},\frac{k-1}{2})+(k-3)f(\frac{k-1}{2},\frac{k-1}{2})  \nonumber \\
&=&(n-k+2)f(\frac{k+1}{2},\frac{k-1}{2})+(k-3)f(\frac{k-1}{2},\frac{k-1}{2}) \nonumber \\
&&+f(\frac{k+1}{2},\frac{k+1}{2}).\nonumber
\end{eqnarray}
And so, the lemma follows.
\end{proof}

\begin{theorem}\label{thm-2-6}
If $G$ is a unicyclic graph of order $n\geq 6$ then $ABC_3(G)\leq n\sqrt{\frac{1}{2}}$, with equality holding if and only if $G\in \{H(n,3;n_1,n_2,n_3),H(n,4;n-4,0,0,0)\}$.
\end{theorem}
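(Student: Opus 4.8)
The plan is to reduce everything to Lemma~\ref{lem-2-5}, which already determines, for each fixed girth $k$, both the unique extremal shape and the exact value of its $ABC_3$ index. Write $M(n,k)$ for the right-hand side of the bound in Lemma~\ref{lem-2-5}, so that $ABC_3(G)\le M(n,k)$ for every $G\in U_n(k)$. Since a unicyclic graph has exactly $n$ edges, the weights attached to the $f$-values in $M(n,k)$ always sum to $n$ (indeed $(n-k+2)+(k-2)=n$ in the even case and $(n-k+2)+(k-3)+1=n$ in the odd case). The theorem then amounts to maximizing $M(n,k)$ over all admissible girths and reading off where the value $n\sqrt{1/2}$ occurs, so I would compare $M(n,k)$ with $n\sqrt{1/2}$ girth by girth.

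First I would settle the two small girths. For $k=3$ Lemma~\ref{lem-2-5} gives $ABC_3(G)\le n\sqrt{1/2}$ directly, with equality exactly for $H(n,3;n_1,n_2,n_3)$. For $k=4$ the only $f$-values in $M(n,4)$ are $f(3,2)$ and $f(2,2)$, and both equal $\sqrt{1/2}$ by Lemma~\ref{lem-2-2} and the identity $f(x,x)=\sqrt{(2x-2)/x^{2}}$; substituting yields $M(n,4)=(n-2)\sqrt{1/2}+2\sqrt{1/2}=n\sqrt{1/2}$. Hence the value $n\sqrt{1/2}$ is attained again, now exactly by $H(n,4;n-4,0,0,0)$, which explains why both graphs appear in the equality class.

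The crux is to prove $M(n,k)<n\sqrt{1/2}$ for every $k\ge 5$. For $k\ge 6$ one has $\lceil k/2\rceil\ge 3$, so each term of $M(n,k)$ is of the form $f(x+1,x)$ or $f(x,x)$ with $x\ge 3$; by Lemma~\ref{lem-2-3} (monotonicity) together with the base values $f(3,2)=f(2,2)=\sqrt{1/2}$, every such term is strictly below $\sqrt{1/2}$, and since the weights sum to $n$ this forces $M(n,k)<n\sqrt{1/2}$ immediately. The delicate value is $k=5$: there the $n-5$ pendant edges and the two adjacent cycle edges still contribute exactly $f(3,2)=\sqrt{1/2}$, so no term-by-term argument closes the gap. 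Instead I would use the clean closed form $M(n,5)=(n-1)\sqrt{1/2}+f(3,3)$ and the single strictly-smaller central edge, noting $f(3,3)=\tfrac23<\sqrt{1/2}$, to conclude $M(n,5)<n\sqrt{1/2}$. This borderline girth is the main obstacle, being the only case in which the comparison is not edge-by-edge and must be closed by one strictly smaller middle edge.

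Finally I would treat the two girths $k\in\{n-1,n\}$ that lie outside the standing hypothesis of Lemma~\ref{lem-2-5}, where $U_n(k)$ is a single graph. For $k=n$ the graph is $C_n$, all of whose eccentricities equal $\lfloor n/2\rfloor\ge 3$ for $n\ge 6$, so every edge contributes $f(\lfloor n/2\rfloor,\lfloor n/2\rfloor)<\sqrt{1/2}$ by Lemma~\ref{lem-2-3}, whence $ABC_3(C_n)<n\sqrt{1/2}$. For $k=n-1$ the unique graph is $C_{n-1}$ with one pendant; when $n-1\ge 6$ all cycle vertices have eccentricity $\ge 3$ and the pendant edge has eccentricities $\ge 3$ and $\ge 4$, so again every edge contributes less than $\sqrt{1/2}$, while the remaining case $n=6,\ k=5$ is a direct check giving $5\sqrt{1/2}+\tfrac23<6\sqrt{1/2}$. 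Collecting the three regimes, $\max_k M(n,k)=n\sqrt{1/2}$ is achieved precisely at $k=3$ and $k=4$, and the equality class is exactly $\{H(n,3;n_1,n_2,n_3),\,H(n,4;n-4,0,0,0)\}$, as claimed.
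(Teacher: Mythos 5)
Your proof is correct, and although it starts from the same reduction as the paper --- Lemma~\ref{lem-2-5} fixes the extremal shape and the value $M(n,k)$ for each girth --- the comparison across girths is done by a genuinely different mechanism. The paper defines $f(k)$ (even girth) and $g(k)$ (odd girth) and telescopes, $f(k)<f(k-2)<\cdots<f(4)=n\sqrt{\tfrac{1}{2}}$, via Lemma~\ref{lem-2-3}, and for the odd chain it merely asserts that $g(k)$ ``has similar property,'' leaving the endpoint comparison $g(5)<n\sqrt{\tfrac{1}{2}}$ implicit; you instead observe that the weights in $M(n,k)$ sum to $n$ (the edge count of a unicyclic graph) and bound each $f$-term individually: for $k\ge 6$ every term is of the form $f(x+1,x)$ or $f(x,x)$ with $x\ge 3$, hence strictly below $\sqrt{\tfrac{1}{2}}$ by Lemma~\ref{lem-2-3}, while the one girth where this per-edge comparison fails, $k=5$, is closed by the explicit value $M(n,5)=(n-1)\sqrt{\tfrac{1}{2}}+f(3,3)$ with $f(3,3)=\tfrac{2}{3}<\sqrt{\tfrac{1}{2}}$ --- which is precisely the computation the paper's odd case silently relies on. Your version also fills a real gap: Lemma~\ref{lem-2-5} operates under the standing assumption $3\le k\le n-2$, yet the paper's proof of Theorem~\ref{thm-2-6} invokes it for girths up to $n-1$ and never treats the cycle $C_n$ itself; you check $k=n$ and $k=n-1$ (including the borderline case $n=6$, $k=5$) directly from the eccentricities, so your argument is both more transparent about why $\sqrt{\tfrac{1}{2}}$ is the critical per-edge value and more complete at the boundary girths.
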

\begin{proof}
If $G$ has even girth $k$ with $6\leq k\leq n-1$, let
$$f(k)=(n-k+2)f(\frac{k}{2}+1,\frac{k}{2})+(k-2)f(\frac{k}{2},\frac{k}{2}),$$
then
$$f(k-2)=(n-k+4)f(\frac{k}{2},\frac{k}{2}-1)+(k-4)f(\frac{k}{2}-1,\frac{k}{2}-1).$$
From Lemma \ref{lem-2-3} it follows that $f(k)<f(k-2)$. Hence, by the second results of Lemma \ref{lem-2-5} we have
\begin{eqnarray}
ABC_3(G)&\leq&ABC_3(H(n,k;n-k,0,\ldots,0))\nonumber\\
&<&ABC_3(H(n,k-2;n-k+2,0,\ldots,0))\nonumber\\
&<&\cdots\nonumber\\
&<&ABC_3(H(n,4;n-4,0,0,0))\nonumber\\
&=&n\sqrt{\frac{1}{2}}.\nonumber
\end{eqnarray}
If $k$ is odd with $5\leq k\leq n-1$, let
$$g(k)=(n-k+2)f(\frac{k+1}{2},\frac{k-1}{2})+(k-3)f(\frac{k-1}{2},\frac{k-1}{2})+f(\frac{k+1}{2},\frac{k+1}{2}),$$
then $g(k)$ has similar property as $f(k)$. And so, $g(k)<ABC_3(H(n,3;n_1,n_2,n_3))=n\sqrt{\frac{1}{2}}$. The theorem follows.
\end{proof}

\section{The second maximal $ABC_3$ index of unicyclic graphs}

For any integer $t\geq 1$ and $n_1\geq t+1$, let $H^t(n,3;n_1-t,n_2,n_3)$ be the unicyclic graph obtained from $H(n,3;n_1-t,n_2,n_3)$ by attaching $t$ new vertices to any given pendent vertex adjacent to $v_1$ of $H(n,3;n_1-t,n_2,n_3)$. Similarly, $H^t(n,k;n-k-t,0,\ldots,0)$ is the unicyclic graph obtained from $H(n-t,k;n-k-t,0,\ldots,0)$ by identifying one of its pendent vertex with the center of star $S_{t+1}$. This section will characterize the unicyclic graph with second maximal $ABC_3$ index.

\begin{lemma}\label{lem-3-1}
If integer $t\geq 1$ then $ABC_3(H^t(n,3;n-3-t,0,0))=(n-1)\sqrt{\frac{1}{2}}+f(3,3)$; if integers $t\geq 2$ and $k\geq 4$ then $ABC_3(H^t(n,k;n-k-t,0,\ldots,0))<ABC_3(H^1(n,k;n-k-1,0,\ldots,0))$.
\end{lemma}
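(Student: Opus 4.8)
The plan is to pin down the relevant eccentricities in each graph explicitly and then compare the two $ABC_3$ sums edge by edge. Throughout write $C=v_1v_2\cdots v_k$ for the cycle and let $w$ denote the distinguished pendent neighbour of $v_1$ carrying the extra leaves, $\ell$ a leaf of $w$, and $u$ a pure pendent of $v_1$; recall that in every graph occurring here the attached vertices all hang off $v_1$. For the first assertion I would first record that in $H^t(n,3;n-3-t,0,0)$ the eccentricities are $e(v_1)=e(w)=2$, $e(v_2)=e(v_3)=3$ and $e(u)=e(\ell)=3$ (the farthest vertex from $v_1$, resp.\ from $w$, being a leaf of $w$, resp.\ a triangle vertex $v_2,v_3$). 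Summing $f$ over the $n$ edges then produces exactly one term $f(3,3)$, from the edge $v_2v_3$, while every other edge — the two triangle edges $v_1v_2,v_1v_3$, the edge $v_1w$, the $t$ edges from $w$ to its leaves, and the $n-4-t$ edges from $v_1$ to its pure pendents — contributes $f(2,3)=f(2,2)=\sqrt{1/2}$ by Lemma \ref{lem-2-2}. Counting gives $2+1+t+(n-4-t)=n-1$ such terms, so $ABC_3=(n-1)\sqrt{1/2}+f(3,3)$, independent of $t$.

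For the second assertion set $r=\lfloor k/2\rfloor$, so $r\ge 2$ since $k\ge 4$. The key structural observation is that in both $H^t(n,k;n-k-t,0,\ldots,0)$ and $H^1(n,k;n-k-1,0,\ldots,0)$ the vertex $w$ always carries at least one leaf, so the maximal ``pendant depth'' at $v_1$ equals $2$ in both graphs. Consequently each cycle vertex $v_i$ has the same eccentricity $e(v_i)=\max\{r,\,d(v_1,v_i)+2\}$ in both graphs, whence the contribution of all cycle edges to $ABC_3$ is identical for $H^t$ and $H^1$ and cancels in the difference. It then remains to read off the outer eccentricities, which are likewise the same in both graphs: $e(v_1)=r$ (using $r\ge 2$), $e(u)=e(w)=r+1$, and $e(\ell)=r+2$.

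Finally I would collect the pendant edges. In $H^t$ the $n-k-t-1$ edges $v_1u$ together with the edge $v_1w$ each contribute $f(r,r+1)$, while the $t$ edges $w\ell$ each contribute $f(r+1,r+2)$; in $H^1$ the analogous counts give $(n-k-1)f(r,r+1)$ and a single $f(r+1,r+2)$. Subtracting and cancelling the common cycle part yields
\[
ABC_3(H^t)-ABC_3(H^1)=(t-1)\bigl(f(r+1,r+2)-f(r,r+1)\bigr).
\]
Since $t\ge 2$ the factor $t-1$ is positive, and since $f(x+1,x)$ is strictly decreasing for $x\ge 2$ (Lemma \ref{lem-2-3}) with $r\ge 2$, we get $f(r+1,r+2)=f(r+2,r+1)<f(r+1,r)=f(r,r+1)$, so the bracket is negative and the whole difference is strictly negative, as required. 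I expect the only delicate point to be the bookkeeping of eccentricities: in particular verifying that the competition between the cycle radius $r$ and the pendant depth $2$ is resolved uniformly for all $k\ge 4$ (the borderline cases $k=4,5$, where $r=2$ equals the pendant depth, deserve an explicit check), and confirming that these values genuinely coincide in $H^t$ and $H^1$ so that the cycle terms cancel cleanly.
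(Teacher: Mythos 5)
Your proposal is correct and takes essentially the same approach as the paper: both arguments pin down the eccentricities explicitly, split the edge sum into cycle and pendant parts, and reduce the difference to $(t-1)\bigl(f(r+2,r+1)-f(r+1,r)\bigr)<0$ via Lemma \ref{lem-2-3}, while the $k=3$ case is a direct calculation using $f(x,2)=\sqrt{1/2}$. The only difference is presentational — you unify the even and odd cases through $r=\lfloor k/2\rfloor$ and let the cycle-edge contributions cancel, whereas the paper writes out the two full formulas separately before subtracting.
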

\begin{proof}
Let $w$ be any pendent vertex of $H^t(n,k;n-k-t,0,\ldots,0)$ that has maximum eccentricity, and $u$ be its neighbor. Noticing that the cycle $C$ of $H^t(n,k;n-k-t,0,\ldots,0)$ has unique vertex $v$ with $e_{H^t(n,k;n-k-t,0,\ldots,0)}(v)=e_{H^t(n,k;n-k-t,0,\ldots,0)}(w)=\frac{k}{2}+2$ if $k$ is even, and that $C$ has exactly two vertices $v_i$ and $v_j$ with maximum eccentricity $\frac{k+3}{2}$ if $k$ is odd, when $k$ is even we have
\begin{eqnarray}
&&ABC_3(H^t(n,k;n-k-t,0,\ldots,0))\nonumber\\
&=&(t+2)f(\frac{k}{2}+2,\frac{k}{2}+1)+(n-k-t+2)f(\frac{k}{2}+1,\frac{k}{2})+(k-4)f(\frac{k}{2},\frac{k}{2}).\nonumber
\end{eqnarray}
Since $k\geq 4$, from Lemma \ref{lem-2-3} it follows that $f(\frac{k}{2}+1,\frac{k}{2})>f(\frac{k}{2}+2,\frac{k}{2}+1)$. And so,
\begin{eqnarray}
&&ABC_3(H^t(n,k;n-k-t,0,\ldots,0))-ABC_3(H^1(n,k;n-k-1,0,\ldots,0))\nonumber\\
&=&(t-1)(f(\frac{k}{2}+2,\frac{k}{2}+1)-f(\frac{k}{2}+1,\frac{k}{2}))\nonumber\\
&<&0.\nonumber
\end{eqnarray}
Similarly, when $k\geq 5$ is odd we have
\begin{eqnarray}
&&ABC_3(H^t(n,k;n-k-t,0,\ldots,0))\nonumber\\
&=&(t+2)f(\frac{k+3}{2},\frac{k+1}{2})+(n-k-t+2)f(\frac{k+1}{2},\frac{k-1}{2})\nonumber\\
&&+f(\frac{k+3}{2},\frac{k+3}{2})+(k-5)f(\frac{k-1}{2},\frac{k-1}{2}).\nonumber
\end{eqnarray}
And so, from Lemma \ref{lem-2-3} it follows that
\begin{eqnarray}
&&ABC_3(H^t(n,k;n-k-t,0,\ldots,0))-ABC_3(H^1(n,k;n-k-1,0,\ldots,0))\nonumber\\
&=&(t-1)(f(\frac{k+3}{2},\frac{k+1}{2})-f(\frac{k+1}{2},\frac{k-1}{2}))\nonumber\\
&<&0.\nonumber
\end{eqnarray}
Finally, when $k=3$, since $t\geq 1$ by direct calculation we have that $ABC_3(H^t(n,3;n-3-t,0,0))=(n-1)\sqrt{\frac{1}{2}}+f(3,3)$. And so, the lemma follows.
\end{proof}

\begin{lemma}\label{lem-3-2}
Let $G$ be an $n$-vertex unicyclic graph with $n\geq 6$ and girth $k\geq 3$. If $k=3$ and $G\ncong  H(n,3;n_1,n_2,n_3)$ then $ABC_3(G)\leq (n-1)\sqrt{\frac{1}{2}}+f(3,3)$, with equality holding if and only if $G\cong H^t(n,3;n-3-t,0,0)$; if $k=4$ and $G\ncong H(n,4;n-4,0,0,0)$ then $ABC_3(G)\leq (n-1)\sqrt{\frac{1}{2}}+f(3,3)$, with equality holding if and only if $G\cong H(n,4;n_1,n-4-n_1,0,0)$ with $n_1\geq 1$ and $n-n_1-4\geq 1$.
\end{lemma}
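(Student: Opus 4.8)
The goal is to bound $ABC_3(G)$ for the non-extremal unicyclic graphs of girth $3$ and $4$, and I would treat the two girths separately, exploiting the fact that Theorem~\ref{thm-2-6} has already pinned down the \emph{maximizers} $H(n,3;n_1,n_2,n_3)$ and $H(n,4;n-4,0,0,0)$ at the common value $n\sqrt{\tfrac12}$. The plan is to show that once we forbid these maximizers, the $ABC_3$ value drops by exactly the gap between one "$f(2,3)=\sqrt{1/2}$"-type contribution and the penalty incurred by pushing some vertex to eccentricity $3$, which is precisely the quantity $(n-1)\sqrt{\tfrac12}+f(3,3)-n\sqrt{\tfrac12}=f(3,3)-\sqrt{\tfrac12}<0$.

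For the girth-$3$ case, I would first invoke Lemma~\ref{lem-2-4} to reduce to graphs where all pendent material is attached in stars, i.e. to an "$H$-type" skeleton, but now allowing a second level of attachment since $G\ncong H(n,3;n_1,n_2,n_3)$ forces at least one vertex at distance $\ge 2$ from the cycle. The key eccentricity bookkeeping is that in any such $G$ of girth $3$, every cycle vertex has eccentricity $2$ or $3$, and a vertex at distance $2$ from the cycle forces a neighbor to eccentricity $3$; combining this with Lemma~\ref{lem-2-2} (so that every edge with a degree-one or eccentricity-$2$ endpoint contributes at most $\sqrt{1/2}$) gives the bound $(n-1)\sqrt{\tfrac12}+f(3,3)$, with the extremal configuration being exactly one "pushed out" branch, i.e. $H^t(n,3;n-3-t,0,0)$, matching Lemma~\ref{lem-3-1}.

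The girth-$4$ case is the more delicate one and where I expect the main obstacle. Here $G\ncong H(n,4;n-4,0,0,0)$, so the extremal graph is \emph{not} of the second-level form; instead Lemma~\ref{lem-3-2} claims the maximizer splits the $n-4$ pendants between \emph{two adjacent} cycle vertices, $H(n,4;n_1,n-4-n_1,0,0)$. The subtle point is that attaching pendants at two cycle vertices of $C_4$ keeps all cycle eccentricities at $3$ and all pendant eccentricities at $4$ \emph{only in the right configuration}, and one must verify via Lemmas~\ref{lem-2-1}, \ref{lem-2-2} and \ref{lem-2-3} that any alternative (pendants at opposite vertices, or a second-level attachment) strictly lowers $ABC_3(G)$. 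I would carry this out by writing $ABC_3$ as a sum over the four cycle edges plus the pendant edges, observing that each pendant edge contributes $f(e(v_i),4)\le\sqrt{1/2}$ (since one endpoint has eccentricity $2$ only in the forbidden $H(n,4;\dots)$ case, else the relevant face is $f(3,4)$ or $f(3,3)$), and then checking that the cycle-edge contributions are maximized precisely for the two-adjacent-vertex distribution. The hard part is the careful case analysis of which cycle vertices attain eccentricity $2$ versus $3$ as the $n_i$ vary, since the eccentricities depend nonlocally on \emph{where} the pendants sit; I would organize this by the number of cycle vertices carrying pendants ($1$, $2$-adjacent, $2$-opposite, $3$, or $4$) and dispatch each using the monotonicity facts already established.
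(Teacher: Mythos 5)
Your girth-$3$ plan is essentially sound and close to the paper's: the paper likewise invokes Lemma~\ref{lem-2-4} to reduce to the form $H^t(n,3;n_1-t,n_2,n_3)$ with $t,n_1-t\geq 1$, and then shows by the explicit expansion $ABC_3=(n_2+n_3+t)f(3,4)+(n_1-t+2)f(2,3)+f(3,3)$ that any $n_2\geq 1$ or $n_3\geq 1$ strictly lowers the value, forcing $G\cong H^t(n,3;n-3-t,0,0)$; your per-edge bounding (every edge contributes at most $\sqrt{1/2}$, and a vertex at distance $\geq 2$ from the triangle forces the far cycle edge down to at most $f(3,3)$) is a workable alternative route to the same inequality, though you leave the equality characterization largely unargued.

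The genuine gap is in your girth-$4$ case: your eccentricity bookkeeping for the claimed extremal graph is wrong, and the mechanism you propose would refute the very statement you are trying to prove. In $H(n,4;n_1,n-4-n_1,0,0)$ with pendants on the two \emph{adjacent} vertices $v_1,v_2$, one has $e(v_1)=e(v_2)=2$, $e(v_3)=e(v_4)=3$, and every pendant vertex has eccentricity $3$ (not $4$); consequently every pendant edge contributes $f(2,3)=\sqrt{1/2}$, three cycle edges contribute $\sqrt{1/2}$, and only the edge $v_3v_4$ contributes $f(3,3)$, which is exactly how the value $(n-1)\sqrt{1/2}+f(3,3)$ arises. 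The configuration you describe --- all cycle eccentricities $3$, all pendant eccentricities $4$, pendant edges contributing $f(3,4)$ --- is the \emph{opposite}-vertex placement $H(n,4;n_1,0,n_3,0)$, which is strictly worse precisely because there the $n-4$ pendant edges each drop from $\sqrt{1/2}$ to $f(3,4)$. Your claim that "one endpoint has eccentricity $2$ only in the forbidden $H(n,4;n-4,0,0,0)$ case" is false, and if it were true, your accounting would cap $ABC_3(G)$ by roughly $(n-4)f(3,4)+4\sqrt{1/2}<(n-1)\sqrt{1/2}+f(3,3)$, so equality could never hold for $H(n,4;n_1,n-4-n_1,0,0)$ --- contradicting Lemma~\ref{lem-3-2} itself. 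The paper's proof avoids this by first using Lemma~\ref{lem-2-4} to restrict to $H(n,4;n_1,n_2,n_3,n_4)$ (at least two $n_i\geq 1$) or $H^t(n,4;n-4-t,0,0,0)$, then using Lemma~\ref{lem-3-1} to force $t=1$ in the latter family and comparing $f(3,4)+(n-1)\sqrt{1/2}<f(3,3)+(n-1)\sqrt{1/2}$ by direct calculation; you would also need some substitute for this comparison against second-level attachments, which your plan mentions but never carries out.
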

\begin{proof}
Let $G$ be an extremal unicyclic graph with girth $k$ and second maximal $ABC_3$. When $k=3$, since $G\ncong H(n,3;n_1,n_2,n_3)$ it follows from Lemma \ref{lem-2-4} that $G\cong H^t(n,3;n_1-t,n_2,n_3)$ with $t,n_1-t\geq 1$. If $n_2\geq 1$ or $n_3\geq 1$, noticing that $n_1+n_2+n_3=n$, by Lemma \ref{lem-2-3} we have
\begin{eqnarray}
&&ABC_3(H^t(n,3;n_1-t,n_2,n_3))\nonumber\\
&=&(n_2+n_3+t)f(3,4)+(n_1-t+2)f(2,3)+f(3,3)\nonumber\\
&<&(n-1)\sqrt{\frac{1}{2}}+f(3,3)\nonumber\\
&=&ABC_3(H^t(n,3;n-t-3,0,0)).\nonumber
\end{eqnarray}
This contradiction and Lemma \ref{lem-3-1} imply that $n_2=n_3=0$, namely $G\cong H^t(n,3;n-t-3,0,0)$ for any $1\leq t\leq n_1-1$. And so, the first statement of Lemma \ref{lem-3-2} is true.

When $k=4$, since $G\ncong H(n,4;n-4,0,0,0)$, from Lemma \ref{lem-2-4} it follows that $G\in \{H(n,4;n_1,\ldots,n_4),H^t(n,4;n-4-t,0,0,0)\}$, where $|\{i:n_i\geq 1,i=1,2,3,4\}|\geq 2$ and $t\geq 1$. By Lemma \ref{lem-3-1}, we deduce that $t=1$. Now, by direct calculation one can obtain easily that
\begin{eqnarray}
ABC_3(H^1(n,4;n-5,0,0,0))
&=&f(3,4)+(n-1)\sqrt{\frac{1}{2}}\nonumber\\
&<&f(3,3)+(n-1)\sqrt{\frac{1}{2}}\nonumber\\
&=&ABC_3(H(n,4;n_1,n-n_1-4,0,0)).\nonumber
\end{eqnarray}

Similarly, one can deduce by direct calculation that $ABC_3(H(n,4;n_1,n_2,n_3,n_4))\leq ABC_3(H(n,4;n_1,n-n_1-4,0,0))$ if $|\{i:n_i\geq 1,i=1,2,3,4\}|\geq 2$, with equality holding if and only if $H(n,4;n_1,n_2,n_3,n_4)$ is isomorphic to $H(n,4;n_1,n-n_1-4,0,0)$. Hence, the second statement of this lemma is also true.
\end{proof}

\begin{theorem}\label{thm-3-1}
If $G\notin \{H(n,4;n-4,0,0,0),H(n,3;n_1,n_2,n_3)\}$ is a unicyclic graph of order $n\geq 6$ then $ABC_3(G)\leq(n-1)\sqrt{\frac{1}{2}}+f(3,3)$, with equality holding if and only if $G\in \{H(n,5;n-5,0,0,0,0),H(n,4;n_1,n-n_1-4,0,0),H^t(n,3;n-t-3,0,0)\}$, where $n_1,n-n_1-4\geq 1$ for $H(n,4;n_1,n-n_1-4,0,0)$, and $t,n-t-3\geq 1$ for $H^t(n,3;n-t-3,0,0)$.
\end{theorem}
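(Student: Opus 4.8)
The plan is to organize the proof according to the girth $k$ of $G$, since the candidate extremal value $(n-1)\sqrt{\frac{1}{2}}+f(3,3)$ should be attained only in girths $3$, $4$ and $5$, and strictly beaten in every larger girth. For girth $k=3$ and $k=4$ the assertion is already exactly Lemma~\ref{lem-3-2}: those two cases supply the families $H^t(n,3;n-t-3,0,0)$ and $H(n,4;n_1,n-n_1-4,0,0)$ together with the bound and the characterization of equality, so nothing new is required there.

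The next step is girth $k=5$. Here I would apply Lemma~\ref{lem-2-5} with $k=5$, which gives $ABC_3(G)\le g(5)$ with equality if and only if $G\cong H(n,5;n-5,0,0,0,0)$. It then suffices to evaluate $g(5)=(n-3)f(3,2)+2f(2,2)+f(3,3)$ and to observe, using $f(3,2)=f(2,2)=\sqrt{\frac{1}{2}}$ from Lemma~\ref{lem-2-2}, that $g(5)=(n-1)\sqrt{\frac{1}{2}}+f(3,3)$, exactly the target value. This contributes the third extremal family and shows that every girth-$5$ graph meets, but does not exceed, the bound.

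The remaining and most delicate step is to show that every unicyclic graph of girth $k\ge 6$ satisfies the \emph{strict} inequality $ABC_3(G)<(n-1)\sqrt{\frac{1}{2}}+f(3,3)$, so that no extremal graph has girth at least $6$. Following the descent argument in Theorem~\ref{thm-2-6}, Lemma~\ref{lem-2-3} yields $f(k)<f(k-2)$ for even $k$ and $g(k)<g(k-2)$ for odd $k$, so it is enough to treat the two base girths $k=6$ and $k=7$. For $k=6$ one has $f(6)=(n-4)f(4,3)+4f(3,3)$, and since $f(4,3)<\sqrt{\frac{1}{2}}$ and $f(3,3)=\frac{2}{3}<\sqrt{\frac{1}{2}}$ one obtains $f(6)<(n-4)\sqrt{\frac{1}{2}}+3\sqrt{\frac{1}{2}}+f(3,3)=(n-1)\sqrt{\frac{1}{2}}+f(3,3)$. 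The odd base case $k=7$ is handled in the same way by comparing $g(7)=(n-5)f(4,3)+4f(3,3)+f(4,4)$ with the target, the only substantive point being the numerical inequality $3f(3,3)+f(4,4)<4\sqrt{\frac{1}{2}}$. Combining Lemma~\ref{lem-2-5} with this monotone descent then yields strictness for all $k\ge 6$.

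Finally I would assemble the three contributions: the maximum of $ABC_3$ over the admissible graphs equals $(n-1)\sqrt{\frac{1}{2}}+f(3,3)$, attained exactly by the girth-$3$, girth-$4$ and girth-$5$ graphs $H^t(n,3;n-t-3,0,0)$, $H(n,4;n_1,n-n_1-4,0,0)$ and $H(n,5;n-5,0,0,0,0)$ listed in the statement. The main obstacle is the $k\ge 6$ analysis: one must confirm that the base-case inequalities at $k=6$ and $k=7$ are strict and that the descent from Theorem~\ref{thm-2-6} genuinely reduces every larger girth to these base cases, since the whole characterization hinges on ruling out large girth.
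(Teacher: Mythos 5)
Your proposal is correct and takes essentially the same route as the paper: girths $3$ and $4$ are dispatched by Lemma~\ref{lem-3-2}, and girth $k\geq 5$ is handled via Lemma~\ref{lem-2-5} together with the monotonicity of Lemma~\ref{lem-2-3}, with equality pinned down exactly at $k=5$ where $g(5)=(n-3)f(3,2)+2f(2,2)+f(3,3)=(n-1)\sqrt{\tfrac{1}{2}}+f(3,3)$. The only cosmetic difference is organizational: you descend stepwise ($f(k)<f(k-2)$, $g(k)<g(k-2)$) to the base girths $6$ and $7$ and verify the numerical inequalities there, whereas the paper bounds every term of $f(k)$ and $g(k)$ directly by its value at $k=6$ (respectively $k=5$) in a single application of Lemma~\ref{lem-2-3} — both arguments rest on the same comparisons $f(4,3)<\sqrt{\tfrac{1}{2}}$, $f(3,3)<\sqrt{\tfrac{1}{2}}$.
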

\begin{proof}
Let $G\notin \{H(n,4;n-4,0,0,0),H(n,3;n_1,n_2,n_3)\}$ be an extremal unicyclic graph with second maximal $ABC_3$ index. By Lemma \ref{lem-3-2}, the theorem is true when graph $G$ has girth $k=3$ or $4$. When $k\geq 5$, by Lemma \ref{lem-2-5} we have that $G\cong H(n,k;n-k,0,\ldots,0)$. If $k\geq 6$ is even, from Lemma \ref{lem-2-3} and \ref{lem-2-5} it follows that
\begin{eqnarray}
ABC_3(G)&=&(n-k+2)f(\frac{k}{2}+1,\frac{k}{2})+(k-2)f(\frac{k}{2},\frac{k}{2})\nonumber\\
&\leq&(n-k+2)f(4,3)+(k-2)f(3,3)\nonumber\\
&<&(n-1)\sqrt{\frac{1}{2}}+f(3,3)\nonumber\\
&=&ABC_3(H(n,4;n_1,n-n_1-4,0,0)), \nonumber
\end{eqnarray}
where $n_1,n-n_1-4\geq 1$. This contradiction shows that $G$ cannot has even girth $k\geq 6$.

If $G$ has odd girth $k\geq 5$, by Lemma \ref{lem-2-3} and \ref{lem-2-5} we deduce that
\begin{eqnarray}
&&ABC_3(G)\nonumber\\
&=&(n-k+2)f(\frac{k+1}{2},\frac{k-1}{2})+(k-3)f(\frac{k-1}{2},\frac{k-1}{2})+f(\frac{k+1}{2},\frac{k+1}{2})\nonumber\\
&\leq&(n-k+2)f(3,2)+(k-3)f(2,2)+f(3,3)\nonumber\\
&=&(n-1)\sqrt{\frac{1}{2}}+f(3,3),\nonumber
\end{eqnarray}
where the equality hold in the inequality if and only if $k=5$. Therefore, the unique extremal graph in this case is $H(n,5;n-5,0,0,0,0)$. And so, the theorem follows.
\end{proof}

\section{Maximal $ABC_3$ index of trees}
Let $C(a_1,a_2,\ldots,a_{d-1})$ be a caterpillar, which is obtained from path $P_d=v_0v_1\cdots v_d$ by attaching $a_i$ isolated vertices to vertex $v_i$ for every $i=1,2,\ldots,d-1$. Clearly, $C(a_1,a_2,\ldots,a_{d-1})$ has diameter $d$ and order $n=\sum\limits_{i=1}^{d-1}a_i+d+1$. For simplicity, let $C_{n,d}$ denote the set of graph $C(0,\ldots,0,a_{\frac{d}{2}},0,\ldots,0)$ when $d$ is even, and $C_{n,d}$ denote the set of all such graphs $C(0,\ldots,0,a_{\frac{d-1}{2}},a_{\frac{d+1}{2}},0,\ldots,0)$ when $d$ is odd.

\begin{lemma}\label{lem-4-1}
If $T$ is an $n$-vertex tree with diameter $d\geq 2$, then
\begin{eqnarray}\nonumber
ABC_3(T)
\leq
\left\{ \begin{array}{ll}
(n-1)\sqrt{\frac{1}{2}}\quad \mbox{$d=2,3$}; \\
(n-d-1)f(\frac{d}{2}+1,\frac{d}{2})+ABC_3(P_d)  \quad \mbox{if $d\geq 4$ is even}; \\
(n-d-1)f(\frac{d+3}{2},\frac{d+1}{2})+ABC_3(P_d) \quad \mbox{if $d\geq 5$ is odd},
\end{array}
\right.
\end{eqnarray}
with the equality holding if and only if $T\in C_{n,d}$.
\end{lemma}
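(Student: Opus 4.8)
The plan is to reduce an arbitrary extremal tree to a caterpillar and then optimize the positions of the attached leaves along its spine. Fix $d\geq 2$ and let $T$ be an $n$-vertex tree of diameter $d$ maximizing $ABC_3$, and fix a diametral path $P=v_0v_1\cdots v_d$. First I would argue, using Lemma \ref{lem-2-4}, that $T$ must be a caterpillar with spine $P$. Concretely, if $T$ is not a caterpillar then some vertex lies at distance $\geq 2$ from $P$; taking a leaf $x$ farthest from $P$ and its neighbor $c$, every child of $c$ is a leaf (else $x$ would not be deepest), so the subtree at $c$ is exactly a star $S_{t+2}$ attached through a pendant vertex to the vertex $u_0$ (the neighbor of $c$ on the $P$-side). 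Replacing this configuration by attaching the $t+1$ leaves directly at $u_0$ is precisely the move from $G_1$ to $G_2$ in Lemma \ref{lem-2-4} (with a parallel elementary computation covering the degenerate case $t=1$ of a pendant path), and it does not decrease $ABC_3$. Iterating drives every off-spine vertex down to distance $1$ from $P$, i.e. produces a caterpillar.

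The step I expect to be the main obstacle is controlling the diameter during this reduction and upgrading the inequalities to strict ones. Since every branch hanging at $v_i$ has depth at most $\min(i,d-i)<d$, the pulled vertices never lie on $P$, so $P$ survives each move and hence $d(T')\geq d$ for the new tree $T'$; and since a pull only moves vertices closer, no distance increases, so $d(T')\leq d$. Thus each move preserves the diameter exactly, and the reduced caterpillar still competes in the same class. For the equality characterization I would observe that the equality instance of Lemma \ref{lem-2-4} (namely $G_0\cong S_n$ with $u_0$ its center) forces $T$ to have diameter at most $3$; therefore for $d\geq 4$ every admissible pull is strict, so no non-caterpillar can be extremal.

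Next I would compute eccentricities in a caterpillar $C(a_1,\dots,a_{d-1})$ with spine $P$. Because each nonzero $a_j$ sits at $1\leq j\leq d-1$, a leaf attached at $v_j$ is never farther from a spine vertex than an endpoint of $P$, so $e(v_i)=\max(i,d-i)$ for every spine vertex, exactly as in $P_d$, while a leaf at $v_j$ has eccentricity $1+\max(j,d-j)$. Writing $m_j=\max(j,d-j)$, the spine edges then contribute $ABC_3(P_d)$ and each leaf at $v_j$ contributes $f(m_j+1,m_j)$, whence
\begin{eqnarray}
ABC_3\bigl(C(a_1,\dots,a_{d-1})\bigr)&=&ABC_3(P_d)+\sum_{j=1}^{d-1}a_j\,f(m_j+1,m_j),\nonumber
\end{eqnarray}
with $\sum_{j}a_j=n-d-1$ fixed.

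Finally I would maximize this linear functional. By Lemma \ref{lem-2-3} the weight $f(m_j+1,m_j)$ is strictly decreasing in $m_j$ for $m_j\geq 2$, so all leaves should be placed where $m_j=\max(j,d-j)$ is smallest: at the unique center $v_{d/2}$ (giving $m_j=d/2$) when $d$ is even, and at the two central vertices $v_{(d\pm1)/2}$ (giving $m_j=(d+1)/2$) when $d$ is odd. Substituting these values yields the even and odd bounds in the statement, and the strict monotonicity identifies the optimizers precisely as $C_{n,d}$. The boundary cases $d=2,3$ I would treat directly: every tree of diameter $2$ is a star and every tree of diameter $3$ is a double star, both already lying in $C_{n,d}$, and since $f(1,2)=f(2,2)=f(2,3)=\sqrt{\frac{1}{2}}$ each such tree attains $(n-1)\sqrt{\frac{1}{2}}$, which completes the characterization.
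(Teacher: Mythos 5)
Your proposal is correct and follows essentially the same route as the paper's proof: reduce the extremal tree to a caterpillar via the star-flattening move of Lemma \ref{lem-2-4}, then use the monotonicity of $f(x+1,x)$ from Lemma \ref{lem-2-3} to push all pendant vertices to the central spine vertex (or the two central vertices when $d$ is odd). The only difference is that you make explicit several points the paper leaves implicit --- preservation of the diameter under the pulls, the degenerate $t=1$ pendant-path case not covered by Lemma \ref{lem-2-4}'s hypothesis $t\geq 2$, and the eccentricity formula $e(v_i)=\max(i,d-i)$ --- which strengthens rather than changes the argument.
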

\begin{proof}
The lemma is obviously true when $2\leq d\leq 3$, since $T$ is the prescribed caterpillar and its $ABC_3$ index can be obtained by direct calculation. So, we assume in what follows that $d\geq 4$, and $T$ has maximal $ABC_3$ index. Let $P_d=v_0v_1\cdots v_d$ be a longest path of $T$. By Lemma \ref{lem-2-4}, we deduce that $T\cong C(a_1,a_2\ldots, a_{d-1})$.

When $d$ is even, $e_{P_d}(v_r)>e_{P_d}(v_{\frac{d}{2}})=\frac{d}{2}\geq 2$ for $v_r\in V(P_d)\backslash v_{\frac{d}{2}}$. And so, by Lemma \ref{lem-2-3}, if we delete the $a_r$ pendent vertices adjacent to $v_r$ and add $a_r$ pendent vertices adjacent to the center vertex $v_{\frac{d}{2}}$, the $ABC_3$ index will increase. This observation shows that $T\in C_{n,d}$ in this case. Similarly, one can deduce with case that $T\in C_{n,d}$ when $d\geq 5$ is odd. Their $ABC_3$ index can also be easily obtained as are listed in the lemma.
\end{proof}

\begin{theorem}\label{thm-4-2}
If $T$ is an $n$-vertex tree with diameter $d\geq 2$ then $ABC_3(T)\leq (n-1)\sqrt{\frac{1}{2}}$, with equality holding if and only if $T\in C_{n,3}\cup \{S_n\}$. Furthermore, $C_{n,4}$ has second maximal $ABC_3$ index.
\end{theorem}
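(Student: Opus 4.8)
The plan is to read everything off Lemma~\ref{lem-4-1}, which already gives $ABC_3(T)\le ABC_3(C_{n,d})$ for every tree $T$ of diameter $d$ (with equality only on $C_{n,d}$), and then to compare the values $ABC_3(C_{n,d})$ across $d$. The single extra ingredient I would isolate first is the estimate
\[
f(x,y)\le\sqrt{\tfrac12}\quad(x,y\ge 2),\qquad\text{with equality iff }\min\{x,y\}=2,
\]
which is immediate from Lemma~\ref{lem-2-2} together with the symmetry $f(x,y)=f(y,x)$: for $y\ge 3$ the map $f(\cdot,y)$ is strictly decreasing, so $f(x,y)\le f(2,y)=\sqrt{\tfrac12}$, while $f(x,2)=\sqrt{\tfrac12}$ for every $x$. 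Since a tree of diameter $d\ge 3$ has radius $\ge 2$, every vertex has eccentricity $\ge 2$, so each of its $n-1$ edges contributes at most $\sqrt{\tfrac12}$.

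For the maximum I would split on $d$. When $d=2$ the tree is $S_n$ and $ABC_3(S_n)=(n-1)f(1,2)=(n-1)\sqrt{\tfrac12}$; when $d=3$, Lemma~\ref{lem-4-1} gives the same value, attained exactly on $C_{n,3}$. For $d\ge 4$ it suffices, by Lemma~\ref{lem-4-1}, to bound $ABC_3(C_{n,d})$. In $C_{n,4}$ the centre $v_2$ has eccentricity $2$, so only the two outer path edges $v_0v_1$ and $v_3v_4$ escape an eccentricity-$2$ endpoint; hence
\[
ABC_3(C_{n,4})=(n-3)\sqrt{\tfrac12}+2f(4,3)<(n-1)\sqrt{\tfrac12},
\]
because $f(4,3)<f(3,2)=\sqrt{\tfrac12}$ by Lemmas~\ref{lem-2-2} and~\ref{lem-2-3}. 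Combined with the comparison below, this shows that no tree of diameter $\ge 4$ reaches $(n-1)\sqrt{\tfrac12}$, so the maximum equals $(n-1)\sqrt{\tfrac12}$ and is attained precisely on $C_{n,3}\cup\{S_n\}$.

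For the second-maximal claim I would prove $ABC_3(C_{n,d})<ABC_3(C_{n,4})$ for every $d\ge 5$ and then invoke the uniqueness in Lemma~\ref{lem-4-1}. When $d\ge 5$ the radius of $C_{n,d}$ is $\lceil d/2\rceil\ge 3$, so every vertex has eccentricity $\ge 3$ and every edge contributes at most $\sqrt{\tfrac12}$; isolating the two outer path edges $v_0v_1,\,v_{d-1}v_d$, each equal to $f(d,d-1)$, gives
\[
ABC_3(C_{n,d})\le (n-3)\sqrt{\tfrac12}+2f(d,d-1).
\]
Since $d-1\ge 4\ge 2$, Lemma~\ref{lem-2-3} yields $f(d,d-1)<f(4,3)$, whence $ABC_3(C_{n,d})<(n-3)\sqrt{\tfrac12}+2f(4,3)=ABC_3(C_{n,4})$. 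As Lemma~\ref{lem-4-1} already identifies $C_{n,d}$ as the unique maximizer within a fixed diameter and dominates every tree of that diameter, the trees in $C_{n,4}$ are exactly those realizing the second largest value, which proves the ``furthermore''.

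The point I expect to require the most care is making the $d\ge 5$ comparison uniform in $n$: routing the estimate through the $n-d-1$ pendant edges weakens as $n\to d+1$, where those edges disappear. The remedy is to run the comparison through the two outer path edges instead, whose values $f(d,d-1)$ are independent of $n$ and lie strictly below $f(4,3)$, while all remaining edges are discarded by the crude bound $\sqrt{\tfrac12}$; this keeps the inequality valid for every admissible $n$. The only remaining verification is routine: attaching the surplus vertices to the centre(s) of $C_{n,d}$ leaves $e(v_0)=d$ and $e(v_1)=d-1$ unchanged, since every such pendant lies within distance $\lceil d/2\rceil+1<d$ of $v_0$ when $d\ge 5$.
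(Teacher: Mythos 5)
Your proposal is correct, and it reaches the theorem by a genuinely different mechanism than the paper at the one place where real work is needed: comparing the caterpillars $C_{n,d}$ across diameters. Both arguments invoke Lemma \ref{lem-4-1} to reduce every tree of diameter $d$ to $C_{n,d}$, and both compute $ABC_3(C_{n,4})=(n-3)\sqrt{\frac{1}{2}}+2f(4,3)$ exactly. From there the paper writes out $ABC_3(P_d)$ in closed form, splits into even and odd $d$, and proves the consecutive inequality $ABC_3(C_{n,d+1})<ABC_3(C_{n,d})$, descending to $d=4$ through a telescoping chain. You bypass all of that with the single estimate $f(x,y)\le\sqrt{\frac{1}{2}}$ for $x,y\ge 2$ (equality iff $\min\{x,y\}=2$), correctly deduced from Lemma \ref{lem-2-2}: discarding all but the two end edges of the longest path, whose values $f(d,d-1)$ do not depend on $n$ and satisfy $f(d,d-1)<f(4,3)$ by Lemma \ref{lem-2-3}, gives $ABC_3(C_{n,d})\le(n-3)\sqrt{\frac{1}{2}}+2f(d,d-1)<ABC_3(C_{n,4})$ for every $d\ge 5$ in one stroke. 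Your eccentricity verification ($e(v_0)=d$ and $e(v_1)=d-1$ because every pendant attached at the center lies within distance $\lceil d/2\rceil+1<d$ of $v_0$) is exactly what makes the two end-edge values exact, and it holds for all $d\ge 5$ as claimed. What your route buys: no parity case split, no path-formula bookkeeping, and uniformity in $n$ (the paper's comparison carries coefficients $n-d-1$ and $n-d-2$ that your argument never touches); what it gives up is the full monotone ordering $ABC_3(C_{n,4})>ABC_3(C_{n,5})>\cdots$ that the paper's chain implicitly establishes, which is stronger than the theorem requires. Your equality analysis (equality forces $d\le 3$, with $S_n$ for $d=2$ and exactly $C_{n,3}$ for $d=3$ by Lemma \ref{lem-4-1}) and your identification of $C_{n,4}$ as the unique second maximizer (diameter-$4$ trees outside $C_{n,4}$ are strictly smaller by Lemma \ref{lem-4-1}, and all larger diameters fall strictly below $ABC_3(C_{n,4})$) are both complete, so the statement, including the furthermore part, is fully proved.
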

\begin{proof}
Firstly, by direct calculation, we have
\begin{eqnarray}\nonumber
ABC_3(P_d)=
\left\{ \begin{array}{ll}
2(f(\frac{d}{2}+1,\frac{d}{2})+f(\frac{d}{2}+2,\frac{d}{2}+1)+\cdots+f(d,d-1))  \quad \mbox{if $d\geq 4$ is even}; \\
f(\frac{d+1}{2},\frac{d+1}{2})+2(f(\frac{d+3}{2},\frac{d+1}{2})+f(\frac{d+5}{2},\frac{d+3}{2})+\cdots+f(d,d-1)) \quad \mbox{if $d\geq 5$ is odd}.
\end{array}
\right.
\end{eqnarray}
For simplicity, the following symbol $C_{n,d}$ denotes any graph in it. When $d\geq 4$ is even, we can deduce by Lemma \ref{lem-4-1} and Lemma \ref{lem-2-3} that
\begin{eqnarray}
&&ABC_3(C_{n,d+1})-ABC_3(C_{n,d}) \nonumber\\
&=&(n-d-2)f(\frac{d}{2}+2,\frac{d}{2}+1)+f(\frac{d}{2}+1,\frac{d}{2}+1)+2(f(\frac{d}{2}+2,\frac{d}{2}+1)\nonumber\\
&&+f(\frac{d}{2}+3,\frac{d}{2}+2)+\cdots+f(d+1,d))-(n-d-1)f(\frac{d}{2}+1,\frac{d}{2})\nonumber\\
&&-2(f(\frac{d}{2}+1,\frac{d}{2})+f(\frac{d}{2}+2,\frac{d}{2}+1)+\cdots+f(d,d-1))\nonumber\\
&<&0.\nonumber
\end{eqnarray}
Similarly, when $d\geq 5$ is odd, by Lemma \ref{lem-4-1} and Lemma \ref{lem-2-3} we have
\begin{eqnarray}
&&ABC_3(C_{n,d+1})-ABC_3(C_{n,d}) \nonumber\\
&=&(n-d-2)f(\frac{d+1}{2}+1,\frac{d+1}{2})+2(f(\frac{d+1}{2}+1,\frac{d+1}{2})
+f(\frac{d+1}{2}+2,\frac{d+1}{2}+1)\nonumber\\
&&+\cdots+f(d+1,d))-(n-d-1)f(\frac{d+3}{2},\frac{d+1}{2})-f(\frac{d+1}{2},\frac{d+1}{2})\nonumber\\
&&-2(f(\frac{d+3}{2},\frac{d+1}{2})+f(\frac{d+5}{2},\frac{d+3}{2})+\cdots+f(d,d-1))\nonumber\\
&=&(n-d-2)f(\frac{d+1}{2}+1,\frac{d+1}{2})+2f(d+1,d)\nonumber\\
&&-(n-d-1)f(\frac{d+3}{2},\frac{d+1}{2})-f(\frac{d+1}{2},\frac{d+1}{2})\nonumber\\
&<&0.\nonumber
\end{eqnarray}
These two observation show that if $d\geq 5$ then $ABC_3(C_{n,d})<ABC_3(C_{n,4})$. Since
\begin{eqnarray}
ABC_3(C_{n,4})&=&(n-4-1)f(\frac{4}{2}+1,\frac{4}{2})+ABC_3(P_4)\nonumber\\
&=&(n-5)f(2,2)+2f(4,3)+2f(3,2)\nonumber\\
&=&(n-3)\sqrt{\frac{1}{2}}+2f(4,3)\nonumber\\
&<&(n-1)\sqrt{\frac{1}{2}},\nonumber
\end{eqnarray}
it follows that the theorem is true.
\end{proof}

\section*{Acknowledgements}This work is supported by the National Natural Science Foundation of China (Grant No. 12061074), the Doctoral Research Foundation of Longdong University (Grant No. XYBYZK2306, XYBYZK2307), Science and Technology Plan Program of Gansu Province of China (No. 24JRRM005), Teacher Innovation Program of Gansu Province (2025B-216), Engineering Project of Longdong University (No. HXZK2437).

\end{document}